\newcommand{\tpmod}[1]{{\@displayfalse\pmod{#1}}}
\def\rnum#1{\expandafter{\romannumeral #1}} 
\def\Rnum#1{\uppercase\expandafter{\romannumeral #1}}
\let\amsmath@bigm\bigm
\renewcommand{\bigm}[1]{%
\ifcsname fenced@\string#1\endcsname
\expandafter\@firstoftwo
\else
\expandafter\@secondoftwo
\fi
{\expandafter\amsmath@bigm\csname fenced@\string#1\endcsname}%
{\amsmath@bigm#1}%
}
\newcommand{\DeclareFence}[2]{\@namedef{fenced@\string#1}{#2}}
\DeclareFence{\mid}{|}
\DeclareMathOperator{\SL}{SL}
\DeclareMathOperator{\PSL}{PSL}
\newcommand{\N}{\mathbb{N}}
\newcommand{\R}{\mathbb{R}}
\newcommand{\Z}{\mathbb{Z}}
\newcommand{\bsm}{\left(\begin{smallmatrix}}
\newcommand{\esm}{\end{smallmatrix} \right)}
\newcommand{\bpm}{\begin{pmatrix}}
\newcommand{\epm}{\end{pmatrix}}
\newtheorem{theorem}{Theorem}[section]
\newtheorem{conjecture}[theorem]{Conjecture}
\newtheorem{observation}{Observation}
\theoremstyle{definition}
\newtheorem{remark}[theorem]{Remark}
\numberwithin{equation}{section}
\newcommand{\Tr}{\mathrm{Tr}}
\newcommand{\li}{\mathrm{li}}
\newcommand{\vol}{\mathrm{vol}}
\begin{document}

\title{Spectral Exponential Sums on Hyperbolic Surfaces}

\author{Ikuya Kaneko}
\address{Department of Mathematics, California Institute of Technology, 1200 E California Blvd, Pasadena, CA 91125, USA}
\email{ikuyak@icloud.com}
\urladdr{\href{https://sites.google.com/view/ikuyakaneko/}{https://sites.google.com/view/ikuyakaneko/}}
\thanks{The author is supported in part by the Masason Foundation and the Spirit of Ramanujan STEM Talent Initiative.}

\subjclass[2010]{Primary 11M36; Secondary 11F72}

\keywords{Spectral exponential sum, Prime Geodesic Theorem, Weyl's law}

\date{\today}

\dedicatory{}

\begin{abstract}
We study an exponential sum over Laplacian eigenvalues $\lambda_{j} = 1/4+t_{j}^{2}$ with $t_{j} \leqslant T$ for Maa{\ss} cusp~forms on $\Gamma \backslash \mathbb{H}$, where $\Gamma$ is a cofinite Fuchsian group acting on the upper half-plane $\mathbb{H}$. The~aim~is~to establish an asymptotic formula which expresses spectral exponential sums in terms of an oscillatory component, von Mangoldt-like functions and Selberg zeta functions. The behaviour is determined by whether $\Gamma$ is essentially cuspidal or not.
\end{abstract}

\maketitle

\section{Introduction}\label{introduction}

\subsection{Motivation}\label{motivation}
This article introduces a new idea to prove an asymptotic law for the spectral exponential sum
\begin{equation}\label{spectral-exponential-sum}
\mathcal{S}(T, X) = \sum_{t_{j} \leqslant T} X^{it_{j}}
\end{equation}
in the spectral aspect. Here $\lambda_{j} = s_{j}(1-s_{j}) = 1/4+t_{j}^{2}$ are non-exceptional eigenvalues of the hyperbolic~Laplacian acting on $\mathcal{L}^{2}(\Gamma \backslash \mathbb{H})$ for a congruence subgroup $\Gamma \subseteq \SL_{2}(\Z)$ and the upper half-plane $\mathbb{H}$. We henceforth~exploit~the sign convention $t_{j} > 0$. There are various applications of spectral exponential sums, but we elaborate on the~Prime Geodesic Theorem for ease of exposition. This concerns~the asymptotic behaviour of the counting function
\begin{equation*}
\pi_{\Gamma}(X) = \#\{\{P \}: N(P) \leqslant X \},
\end{equation*}
where $\{P \}$ signifies a primitive hyperbolic conjugacy class in $\Gamma$ and $N(P)$ stands for its norm. We call~$\{P \}$~primitive if a hyperbolic element $P \in \Gamma$ cannot be expressed as $Q^{j}$ with $j \geqslant 2$ for some $Q \in \Gamma$; hence every hyperbolic conjugacy class is a power of some primitive class. By partial summation, one passes between asymptotic results for $\pi_{\Gamma}(X)$ and the allied counting function
\begin{equation*}
\Psi_{\Gamma}(X) = \sum_{N(P) \leqslant X} \Lambda_{\Gamma}(P),
\end{equation*}
where the sum runs over all hyperbolic classes and $\Lambda(\cdot)$ is the analogue of the von Mangoldt function defined by $\Lambda_{\Gamma}(P) = \log N(P_{0})$ if $\{P \}$ is a power of a primitive hyperbolic class $\{P_{0} \}$.

In his seminal work, Iwaniec~\cite{Iwaniec1984} obtained the explicit formula for $\Psi_{\Gamma}(X)$ as a corollary of the Selberg trace formula and an inequality \`{a} la Brun--Titchmarsh; see~\cite{Bykovskii1994,Iwaniec1984,KanekoKoyama2018}. For any congruence~subgroup $\Gamma$, we have that
\begin{equation}\label{explicit-formula}
\Psi_{\Gamma}(X) = X+\sum_{1/2 < s_{j} < 1} \frac{X^{s_{j}}}{s_{j}}
 + \sum_{|t_{j}| \leqslant T} \frac{X^{s_{j}}}{s_{j}}+O \left(\frac{X}{T}(\log X)^{2} \right)
\end{equation}
for an auxiliary parameter $1 \leqslant T \leqslant X^{1/2}(\log X)^{-2}$. The first sum on the right-hand side arises from the~exceptional eigenvalues of the Laplacian and the sum $\sum_{|t_{j}| \leqslant T}$ denotes that it is symmetrised by including both $t_{j}$ and $-t_{j}$. The explicit formula~\eqref{explicit-formula} implies that even when $T$ is suitably controlled, one cannot reach an error term smaller than $O(X^{3/4+\epsilon})$ without considering any cancellation in the spectral exponential sum. It behoves us to mention that the corresponding barrier for $\mathcal{S}(T, X)$ is $O(T^{2})$, which is called the trivial bound.

We recall that the Selberg zeta function is built out of prime geodesics as follows:
\begin{equation}\label{Selberg-zeta-function}
Z_{\Gamma}(s) = \prod_{\{P_{0} \}} \prod_{k = 0}^{\infty} (1-N(P_{0})^{-s-k}),
\end{equation}
where the outer product runs over all primitive hyperbolic classes. Given the analogue of the Riemann~hypothesis for the Selberg zeta functions of congruence subgroups (apart from a finite number of exceptional zeroes), we should conjecture that $\Psi_{\Gamma}(X) = X+O(X^{1/2+\epsilon})$. This remains an impenetrable open problem. For convenience, we define $\eta$ to be such that the following formula holds:
\begin{equation}\label{eta}
\pi_{\Gamma}(X) = \li(X)+O_{\epsilon}(X^{\eta+\epsilon}), \qquad \li(X) = \int_{2}^{X} \frac{dt}{\log t}.
\end{equation}
In broad strokes, the optimal value of $\eta$ so far established marks the level of current technology.

If $\Gamma$ is arithmetic, an improvement over the 3/4-barrier can be deduced by appealing to the Kuznetsov~formula. A general rule of thumb is that using the Selberg trace formula leads to cleaner formul{\ae} than using the Kuznetsov formula --- for estimations the latter is known to be more beneficial nonetheless. In retrospect, the first landmark triumph to go beyond the 3/4-barrier was due to Iwaniec~\cite{Iwaniec1984}, who showed for $\Gamma = \PSL_{2}(\Z)$ that
\begin{equation*}
\mathcal{S}(T, X) \ll_{\epsilon} TX^{11/48+\epsilon}
\end{equation*}
with $T, \, X \gg 1$. This bound was improved in the celebrated work of Luo--Sarnak~\cite{LuoSarnak1995}:
\begin{equation*}
\mathcal{S}(T, X) \ll X^{1/8} T^{5/4}(\log T)^{2}.
\end{equation*}
This is also achievable for any arithmetic group $\Gamma \subset \PSL_{2}(\R)$ such as a congruence subgroup and a subgroup arising from a quaternion devision algebra. They showed $\eta = 7/10$ according to the optimal choice $T = X^{3/10}$. The crucial step in all of these works was to showcase a nontrivial bound on $\mathcal{S}(T, X)$. Soundararajan--Young~\cite{SoundararajanYoung2013} succeeded in showing the best known bound for $\PSL_{2}(\Z)$, namely $\eta = 2/3+\theta/6$ with~$\theta$ a subconvex exponent for quadratic Dirichlet $L$-functions. The Weyl-strength exponent $\theta = 1/6$ of Conrey--Iwaniec~\cite{ConreyIwaniec2000} yields~$\eta = 25/36$. The conjectural exponent $\eta = 2/3$ follows from the Lindel\"{o}f hypothesis for Dirichlet $L$-functions. The author~\cite{Kaneko2019} recently established an exact connection between pointwise and second moment bounds for the error term in~\eqref{eta}.

It is important to speculate what the correct order of $\mathcal{S}(T, X)$ should be. Petridis--Risager~\cite[Conjecture 2.2]{PetridisRisager2017} have conjectured square root cancellation, namely
\begin{equation}\label{Petridis-Risager}
\mathcal{S}(T, X) \ll_{\epsilon} T(TX)^{\epsilon}.
\end{equation}
Moreover, the bound~\eqref{Petridis-Risager} yields not only the best possible error term $O(X^{1/2+\epsilon})$ in the Prime Geodesic~Theorem, but also the best error term on average for the hyperbolic lattice point counting. In the appendix of~\cite{PetridisRisager2017}, Laaksonen has proven via the Selberg trace formula that the conjecture~\eqref{Petridis-Risager} is true for a fixed $X > 1$ in the spectral aspect.

\subsection{Results}\label{results}
This work considers the congruence subgroups $\Gamma_{0}(q), \, \Gamma_{1}(q)$ and $\Gamma(q)$ defined in Section~\ref{scattering-determinants}. Just before stating our results, we introduce the following standard notation. Let $\Lambda(X)$ be the von Mangoldt function extended to $\R$ by defining it to be 0 when $X$ is not a prime power. Define an analogous function $\Lambda_{\Gamma}(X)$ for norms of hyperbolic conjugacy classes in $\Gamma$ as
\begin{equation*}
\Lambda_{\Gamma}(X) \coloneqq
	\begin{cases}
	\log N(P_{0}) & \text{if $X = N(P)^{j}, \, j \geqslant 1$},\\
	0 & \text{otherwise}.\\
	\end{cases}
\end{equation*}
One of our aims is to establish that the spectral exponential sum $\mathcal{S}(T, X)$ associated to a congruence subgroup $\Gamma$ obeys a conjectural bound in the spectral aspect to which we have already alluded.
\begin{theorem}\label{main}
For $\Gamma = \Gamma_{0}(q), \, \Gamma_{1}(q)$, $\Gamma(q)$, we define
\begin{equation*}
S(T) = \frac{1}{\pi} \arg Z_{\Gamma} \left(\frac{1}{2}+iT \right) \quad \text{and} \quad
\mathcal{G}(T) = \int_{1/2}^{2} \log|Z_{\Gamma}(\sigma+iT)| d\sigma.
\end{equation*}
For a fixed $X > 1$, we then have
\begin{equation}\label{Fujii}
\mathcal{S}(T, X) = \frac{\vol(\Gamma \backslash \mathbb{H})}{2\pi i \log X} X^{iT} T
 + \frac{T}{2\pi}(X^{1/2}-X^{-1/2})^{-1} \Lambda_{\Gamma}(X)
 + \frac{T}{\pi} X^{-1/2} \Lambda(X^{1/2}) \sideset{}{^{\star}} \sum_{\psi} \Re(\psi(X^{1/2}))
 + X^{iT} S(T)+O(\mathcal{G}(T)),
\end{equation}
where the sum runs over Dirichlet characters $\psi$ to some modulus dividing $q$ and the meaning of $\star$ on the sum is described in Lemma~\ref{Hejhal-Huxley}.
\end{theorem}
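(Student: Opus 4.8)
The plan is to mimic the classical derivation of Fujii-type formulae for the exponential sum over zeros of the Riemann zeta function, transplanted to the Selberg zeta function $Z_{\Gamma}(s)$. First I would start from the integral representation
\[
\mathcal{S}(T, X) = \sum_{t_{j} \leqslant T} X^{it_{j}} = \frac{1}{2\pi i} \oint X^{s-1/2} \frac{Z_{\Gamma}'}{Z_{\Gamma}}(s) \, ds
\]
(plus contributions from the trivial/scattering zeros and exceptional eigenvalues), where the contour is a long rectangle with vertices roughly at $\tfrac12 - \delta$, $2$, $2 + iT$, $\tfrac12 - \delta + iT$, chosen so that the nontrivial zeros $\tfrac12 \pm it_{j}$ with $0 < t_{j} \leqslant T$ are precisely the poles enclosed (after accounting for the symmetry $s \leftrightarrow 1-s$ and using the sign convention $t_{j} > 0$). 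The functional equation for $Z_{\Gamma}(s)$ — which for a congruence subgroup involves the area term $\mathrm{vol}(\Gamma \backslash \mathbb{H})$, the scattering determinant $\varphi(s)$, and Gamma-factors — is what converts the left edge of the rectangle into the main terms. The scattering determinant for $\Gamma_{0}(q)$, $\Gamma_{1}(q)$, $\Gamma(q)$ factors (essentially) into products of ratios of completed Dirichlet $L$-functions, and this is exactly where the sum $\sideset{}{^{\star}}\sum_{\psi}$ over Dirichlet characters and the ordinary von Mangoldt function $\Lambda(X^{1/2})$ will enter; the explicit scattering computation is assumed available from Section~\ref{explicit-computation-of-scattering-determinants} and Lemma~\ref{Hejhal-Huxley}.

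Next I would evaluate each of the four pieces. The top and bottom horizontal segments at height $0$ and height $T$ contribute, after bounding $Z_{\Gamma}'/Z_{\Gamma}$ on the line $\Re s = 2$ by an absolutely convergent Dirichlet-type series in $N(P_0)^{-s}$, the geometric term $\tfrac{T}{2\pi}(X^{1/2} - X^{-1/2})^{-1} \Lambda_{\Gamma}(X)$ — this is the residue/stationary contribution picking out $N(P) = X$ — together with the error-type remainder; a careful treatment of the horizontal integrals near the critical line is where $\mathcal{G}(T) = \int_{1/2}^{2} |Z_{\Gamma}(\sigma + iT)| \, d\sigma$ naturally appears as the error. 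The right edge at $\Re s = 2$ gives a convergent sum which, combined with its functional-equation reflection, yields the $\Lambda_{\Gamma}$ term and the $\Lambda$-over-characters term. The left edge, via the functional equation, produces the leading term $\tfrac{\mathrm{vol}(\Gamma\backslash\mathbb{H})}{2\pi i \log X} X^{iT} T$ (this is the Weyl-law main term: the logarithmic derivative of the Gamma/area factor is $\asymp \mathrm{vol}(\Gamma\backslash\mathbb H)\, s$, and integrating $X^{s-1/2} \cdot s$ up the line of length $T$ gives $T \cdot X^{iT}/\log X$ to leading order), plus the scattering contribution. Finally, the quantity $X^{iT} S(T)$ with $S(T) = \tfrac1\pi \arg Z_{\Gamma}(\tfrac12 + iT)$ is precisely the defect between the number of zeros up to height $T$ counted by the contour and the smooth Weyl-law count — it arises from the argument-principle bookkeeping of exactly where the horizontal line $\Im s = T$ falls relative to the zeros, the standard $\tfrac1\pi \arg$ correction term in any such explicit formula.

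The main obstacle I expect is controlling the horizontal integrals across the critical strip — precisely, showing that the contributions of the segments $\Im s = T$ (and the choice of $T$ avoiding zeros, or averaging over a short interval in $T$) are genuinely $O(\mathcal{G}(T))$ rather than something larger. This requires knowing that $Z_{\Gamma}'/Z_{\Gamma}(\sigma + iT)$ is not too large for $\tfrac12 \leqslant \sigma \leqslant 2$, which one handles in the classical case by the Borel–Carathéodory / Hadamard three-circles machinery relating $\log|Z_{\Gamma}|$ to the number of nearby zeros; the subtlety for the Selberg zeta function is the much greater density of zeros (Weyl law $\sim cT^2$ versus $\sim c T \log T$ for $\zeta$), so one cannot simply assert a zero-free region and must instead keep the $\mathcal{G}(T)$ term as an honest, possibly large, error — which is exactly why the theorem is stated with $\mathcal{G}(T)$ rather than a power of $T$. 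A secondary technical point is carefully matching the exceptional (small) eigenvalues $1/2 < s_j < 1$ and the finitely many non-critical zeros of $Z_\Gamma$ coming from the scattering poles, so that they either cancel against analogous terms on the reflected contour or get absorbed into the stated error; since $X$ is fixed these contribute $O(1)$ and are harmless. I would organize the write-up as: (i) set up the contour and state the functional equation; (ii) reduce to the four edge integrals; (iii) evaluate right edge + reflection $\Rightarrow$ the $\Lambda_\Gamma$ and $\Lambda$-character terms; (iv) evaluate left edge $\Rightarrow$ Weyl main term; (v) horizontal edges $\Rightarrow$ $\mathcal{G}(T)$ error and the $X^{iT}S(T)$ term via the argument principle.
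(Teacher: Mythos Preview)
Your approach is genuinely different from the paper's, and the difference is instructive. The paper does \emph{not} start from a contour integral of $Z_{\Gamma}'/Z_{\Gamma}$. Instead it writes $\mathcal{S}(T,X)=\int_{1}^{T}X^{it}\,dN_{\Gamma}(t)$ as a Stieltjes integral and feeds in the refined Weyl law
\[
N_{\Gamma}(T)+M_{\Gamma}(T)=\frac{\vol(\Gamma\backslash\mathbb{H})}{4\pi}T^{2}-\frac{h}{\pi}T\log T+c_{\Gamma}T+S(T)+\mathrm{const.}+w(T),
\]
which immediately splits $\mathcal{S}(T,X)$ into three pieces $\mathscr{L}^{1},\mathscr{L}^{2},\mathscr{L}^{3}$. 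The oscillatory leading term is just $\tfrac{\vol}{2\pi}\int_{1}^{T}X^{it}t\,dt$ integrated by parts --- no functional equation of $Z_{\Gamma}$ is invoked for it. The character sum with $\Lambda(X^{1/2})$ comes from $\mathscr{L}^{2}=-\int X^{it}\,dM_{\Gamma}(t)$ via Huxley's explicit scattering determinant (Theorem~\ref{Hejhal-Huxley}), by expanding $\log L(1+2it,\psi)$ into its Dirichlet series and shifting slightly off the $1$-line. The $\Lambda_{\Gamma}(X)$ term and the $\mathcal{G}(T)$ error both come from $\mathscr{L}^{3}=\int X^{it}\,dS(t)$: after an integration by parts one is left with $\int_{1}^{T}\sin(t\log X)S(t)\,dt$, and since $S(t)=\tfrac{1}{\pi}\Im\log Z_{\Gamma}(\tfrac12+it)$ this is an integral of $\log Z_{\Gamma}$ along the critical line, which is then shifted to $\Re s=1+\delta$ where the Euler product for $\log Z_{\Gamma}$ converges and picks out $\Lambda_{\Gamma}(X)$; the horizontal segment at height $T$ is what produces $\mathcal{G}(T)$.

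Your direct $Z_{\Gamma}'/Z_{\Gamma}$-contour route is essentially what the paper sketches separately in Section~\ref{heuristics-on-the-explicit-formula-for-the-spectral-exponential-sum} as a heuristic (``explicit formula for $\mathcal{S}(T,X)$''), and the paper notes there that the functional equation factor $\Xi_{\Gamma}(s)$ is complicated and that the resulting error terms are harder to control. Two concrete points where your outline would need repair: (i) the horizontal integral you want to bound involves $Z_{\Gamma}'/Z_{\Gamma}(\sigma+iT)$, not $\log Z_{\Gamma}$ or $|Z_{\Gamma}|$, so it does not obviously reduce to $\mathcal{G}(T)=\int_{1/2}^{2}|Z_{\Gamma}(\sigma+iT)|\,d\sigma$ without first integrating by parts in $s$ to pass to $\log Z_{\Gamma}$ --- and once you do that you are essentially back to the paper's treatment of $\mathscr{L}^{3}$; (ii) your attribution of the terms to the edges is slightly scrambled: in a $Z_{\Gamma}'/Z_{\Gamma}$ contour, the $\Lambda_{\Gamma}(X)$ term comes from the \emph{right} vertical edge directly (Dirichlet series for $Z_{\Gamma}'/Z_{\Gamma}$), while the Weyl main term and the scattering/character contribution both come from the left edge via the full functional equation $Z_{\Gamma}(1-s)=Z_{\Gamma}(s)\varphi(s)\Xi_{\Gamma}(s)$, with $\Xi_{\Gamma}$ supplying the area term and $\varphi$ the Dirichlet $L$-functions. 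The paper's Weyl-law route sidesteps $\Xi_{\Gamma}$ entirely, which is the real economy.
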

The formula~\eqref{Fujii} for $\Gamma = \PSL_{2}(\Z)$ was announced without a proof by Fujii in 1984 in his report~\cite{Fujii1984}.~Note~that his theorem recovers Laaksonen's result. In Fujii's work, the additional term $X^{iT} S(T)$ was replaced~by~$O(T/\log T)$, which follows from the trivial bounds $S(T) \ll T/\log T$ and $\mathcal{G}(T) \ll T/\log T$. We may revisit the issue~of~bounding $\mathcal{G}(T)$ in a cleverer manner, elsewhere. Theorem~\ref{main} generalises his result to the congruence subgroups~$\Gamma_{0}(q)$, $\Gamma_{1}(q)$ and $\Gamma(q)$. It makes us believe that the extremely strong bound $\Psi_{\Gamma}(X)-X \ll X^{1/2+\epsilon}$ may hold.
\begin{remark}\label{remark}
For a fixed $X > 1$, the spectral exponential sum $\mathcal{S}(T, X)$ has a peak of order $T$ whenever $X$ is equal to a power of a norm of a primitive hyperbolic class in $\Gamma$, or to an even power of a prime number which comes from the determinant of the scattering matrix associated with $\Gamma$. At the peak point $X$ corresponding to the peak of $\mathcal{S}(T, X)$, the second or the third term in the asymptotic formula~\eqref{Fujii} does not vanish. Indeed, $\mathcal{S}(T, X)$ does~not always exhibit a clear peak structure; see~\cite[Appendix]{PetridisRisager2017}.
\end{remark}
Remark~\ref{remark} for $\Gamma = \PSL_{2}(\Z)$ is in accordance with the theorems by Chazarain~\cite{Chazarain1974}, where for the wave~kernel the singularities occur at the lengths of closed geodesics. Via Theorem~\ref{main}, one can utilise $\mathcal{S}(T, X)$ as a~roundabout way of detecting $N(P)$. There must be a connection underneath the surface between spectral parameters~$t_{j}$~and~the length spectrum in $\mathbb{H}$. The spectral exponential sum $\mathcal{S}(T, X)$ differs from the classical one counted with Riemann zeroes where the peaks are known to be at all prime powers. By Theorem~\ref{main}, we conjecture the following~square root cancellation for $\mathcal{S}(T, X)$, which generalises the conjecture of Petridis--Risager for $\Gamma = \PSL_{2}(\Z)$.
\begin{conjecture}\label{conjecture}
Let $X > 2$. For any arithmetic hyperbolic surface, the spectral exponential sum $\mathcal{S}(T, X)$ exhibits square root cancellation in $T$ with uniform dependence on $X$ up to a factor of $X^{\epsilon}$, namely
\begin{equation*}
\mathcal{S}(T, X) \ll T^{1+\epsilon} X^{\epsilon}.
\end{equation*}
\end{conjecture}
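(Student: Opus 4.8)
The plan is to take the explicit formula of Theorem~\ref{main} as the point of departure and reduce the conjectured bound to the control of the two purely spectral quantities $S(T)$ and $\mathcal{G}(T)$, neither of which depends on $X$. First I would verify that each of the three leading terms in~\eqref{Fujii} is already $\ll T^{1+\epsilon}X^{\epsilon}$ with uniform dependence on $X$: the Weyl term is bounded by $\frac{\vol(\Gamma\backslash\mathbb{H})}{2\pi\log X}\,T\ll_{\Gamma}T$ since $X>2$; the geodesic term satisfies $(X^{1/2}-X^{-1/2})^{-1}\Lambda_{\Gamma}(X)\ll X^{-1/2}\log X\ll X^{\epsilon}$; and the scattering term is $\ll X^{-1/2}\Lambda(X^{1/2})\,\phi(q)\ll_{\Gamma}X^{\epsilon}$, the number of characters $\psi$ to a modulus dividing $q$ being bounded in terms of $\Gamma$ alone. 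Hence the conjecture is equivalent to the pair of estimates $|S(T)|+\mathcal{G}(T)\ll T^{1+\epsilon}$ and a version of~\eqref{Fujii} in which the constant implied by $O(\mathcal{G}(T))$ grows at most like $X^{\epsilon}$.

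The first of these is supplied, with room to spare, by the trivial bounds $S(T)\ll T/\log T$ and $\mathcal{G}(T)\ll T/\log T$ recorded after Theorem~\ref{main}; in particular the sharper hypothesis $S(T)\ll T^{\epsilon}$ of the Hejhal remark is not needed merely to reach the exponent $1$. The entire weight of the conjecture therefore rests on the second requirement: promoting the fixed-$X$ asymptotic~\eqref{Fujii} to a bound whose error is uniform in $X$ up to a factor $X^{\epsilon}$, which is exactly what is wanted in the Prime Geodesic regime $X$ large, $T\leqslant X^{1/2}$ where Fujii's $T\to\infty$, $X$ fixed asymptotic does not directly apply. Tracking the $X$-dependence through the contour-shift derivation of~\eqref{Fujii}, the obstruction is seen to reside in the geometric side of the explicit formula, where a length-spectrum sum of the shape $\sum_{\{P\}}\Lambda_{\Gamma}(P)\,N(P)^{-1/2}k_{T,X}(N(P))$ coming from $Z_{\Gamma}'/Z_{\Gamma}$ must be controlled; the naive triangle-inequality estimate carries an $X$-dependent constant, and only genuine cancellation among the phases $N(P)^{iT}$ reduces it to $X^{\epsilon}$.

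For an \emph{arithmetic} surface I would extract this cancellation spectrally rather than geometrically, following the Kuznetsov-formula strategy of Luo--Sarnak~\cite{LuoSarnak1995} and Soundararajan--Young~\cite{SoundararajanYoung2013}. Choosing an even test function $h$ localized to $|t|\leqslant T$ whose transform is supported near $\log X$, one writes $\sum_{j}X^{it_{j}}h(t_{j})$ via the Kuznetsov trace formula as a diagonal term (reproducing the Weyl main term of order $T$), a continuous-spectrum term governed by $\zeta$ and the Dirichlet $L$-functions attached to the scattering matrix of $\Gamma$, and an off-diagonal term $\sum_{c}c^{-1}S(m,n;c)\,\mathcal{B}_{T,X}(4\pi\sqrt{mn}/c)$ of Kloosterman sums weighted by a Bessel transform $\mathcal{B}_{T,X}$. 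Square-root cancellation in $T$ for $\mathcal{S}(T,X)$, uniformly in $X$, then amounts to square-root cancellation in this Kloosterman sum after the $T$-localization and stationary-phase analysis of $\mathcal{B}_{T,X}$, the continuous part being absorbed by Lindel\"{o}f-on-the-critical-line estimates for the relevant $L$-functions.

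The hard part — and, candidly, the reason this is stated as a conjecture rather than a theorem — is precisely this uniform square-root cancellation in the Kloosterman (equivalently length-spectrum) sum. Weil's bound alone returns only the trivial $T^{2}$, and the strongest unconditional inputs (Weyl-type subconvexity, the Kuznetsov--Bykovskii formula) yield the exponents $11/48$, $5/4$, and $2/3+\theta/6$ discussed in the introduction but stop short of the exponent $1$. Attaining the conjectural bound uniformly in $X$ would require essentially optimal cancellation of Linnik--Selberg type, equivalently the Lindel\"{o}f hypothesis for the associated families of $L$-functions; no presently available method delivers this, and I would expect any complete proof to proceed through such an arithmetic input rather than through the geometric explicit formula in isolation.
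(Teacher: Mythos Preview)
The statement is a \emph{conjecture}, and the paper does not prove it; it is proposed on the strength of Theorem~\ref{main} (which gives the fixed-$X$ asymptotic~\eqref{Fujii} with leading term of order $T$) together with the numerical evidence for moonshine groups in Section~\ref{numerical-evidence-towards-the-conjecture}. You clearly recognize this yourself, and your final paragraph states outright that no presently available method delivers the required cancellation. So what you have written is not a proof but a heuristic road-map, and in that capacity it is broadly accurate: the obstruction is indeed the passage from the fixed-$X$ formula~\eqref{Fujii} to one whose implied constants are uniform in $X$, and on the arithmetic side this is tantamount to Linnik--Selberg/Lindel\"of-type cancellation in the Kloosterman or length-spectrum sums, exactly as Iwaniec's heuristic and the Petridis--Risager conjecture suggest.

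One point of your reduction deserves a caveat. You argue that the three explicit terms in~\eqref{Fujii} are each $\ll T^{1+\epsilon}X^{\epsilon}$ and then say the conjecture is ``equivalent'' to $|S(T)|+\mathcal{G}(T)\ll T^{1+\epsilon}$ together with an $X^{\epsilon}$-uniform version of~\eqref{Fujii}. But Theorem~\ref{main} is proved only for \emph{fixed} $X$: every implied constant in the derivation, including those absorbed into $O(\log T)$ and $O(\mathcal{G}(T))$, carries an unspecified $X$-dependence (the contour shifts use $\delta=(\log X)^{-1}$, the hyperbolic and Dirichlet sums are truncated with $X$-dependent tails, and the bound on the horizontal integral is written as $\ll \mathcal{G}(T)$ with an $X$-dependent factor suppressed). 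So the ``equivalence'' you assert is not available from~\eqref{Fujii} as stated; you would first have to redo the proof of Theorem~\ref{main} with all $X$-dependence tracked, which is essentially the same problem as the uniform explicit formula you discuss afterwards (and which the paper addresses, inconclusively, in Section~\ref{heuristics-on-the-explicit-formula-for-the-spectral-exponential-sum}). In short: your diagnosis of where the difficulty lies matches the paper's, but there is no proof here, nor in the paper.
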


\section*{Acknowledgements}
Thanks are owed to Shin-ya Koyama for pointing out errors in an earlier version of this article.

\section{Sketch of requisites}\label{sketch-of-requisites}
We provide background materials which we shall need later to establish Theorem~\ref{main}.

\subsection{Weyl's law}\label{Weyls-law}
The spectrum of the Laplacian on hyperbolic surfaces has a connection with certain objects~in number theory such as $L$-functions and exponential sums. There are a number of conjectures centered~around~the structure of the discrete spectrum. Apart from $\lambda_{0} = 0$, we cannot separately count the discrete and the~continuous spectra. For a general cofinite group, it remains an open problem which spectrum is larger in order of~magnitude; see Section~\ref{counterexamples}. Weyl's law renders asymptotic behaviour of both discrete and continuous spectrum in~an~expanding window. For any Fuchsian group $\Gamma$ of the first kind, it asserts that
\begin{equation}\label{Weyl-law}
N_{\Gamma}(T)+M_{\Gamma}(T) \sim \frac{\vol(\Gamma \backslash \mathbb{H})}{4\pi} T^{2}
\end{equation}
as $T \to \infty$, where $N_{\Gamma}(T) \coloneqq \# \{j: t_{j} \leqslant T \}$ and $M_{\Gamma}(T)$ is the winding number which accounts for the contribution of the continuous spectrum:
\begin{equation*}
M_{\Gamma}(T) \coloneqq \frac{1}{4\pi} \int_{-T}^{T} -\frac{\varphi^{\prime}}{\varphi} \left(\frac{1}{2}+it \right) dt.
\end{equation*}
Here $\varphi$ signifies the determinant of the scattering matrix $\Phi$ of $\Gamma$. In order to establish the asymptotic formula~\eqref{Weyl-law}, the complete spectral decomposition of an automorphic kernel is needed. The Selberg trace formula is a beneficial tool for deducing~\eqref{Weyl-law}. If $\Gamma \backslash \mathbb{H}$ is compact, $M_{\Gamma}(T)$ vanishes and the identity~\eqref{Weyl-law} can be reduced to the asymptotic behaviour of $N_{\Gamma}(T)$. We note that $M_{\Gamma}(T)$ is real and approximately equal to the number of scattering poles~on~the left of the critical line $\Re(s) = 1/2$ with heights at most $T$.

As we shall see later, the formula~\eqref{Weyl-law} is not satisfactory for our purpose. We recall a refined version of~Weyl's law (cf.~\cite[Theorem 5.2.1]{Venkov1982},~\cite[Theorem 7.2]{Venkov1990},~\cite[(11.3)]{Iwaniec2002}), which is available for the general cofinite scenario:
\begin{theorem}\label{Weyl}
Let $\Gamma$ be a cofinite subgroup of $\PSL_{2}(\R)$. We then have that
\begin{equation}\label{Weyl-law-2}
N_{\Gamma}(T)+M_{\Gamma}(T) = \frac{\vol(\Gamma \backslash \mathbb{H})}{4\pi} T^{2}
 - \frac{h}{\pi} T \log T+c_{\Gamma} T+S(T)+\mathrm{const.}+w(T)
\end{equation}
as $T \to \infty$, where $h$ is the number of inequivalent cusps and $c_{\Gamma}$ is a certain constant dependent only on $\Gamma$, $S(T)$~is the same as in the introduction, and $w(T)$ can be chosen such that $w^{\prime}(T) \ll T^{-2}$.
\end{theorem}
Theorem~\ref{Weyl} can be deduced almost verbatim the argument used to establish~\eqref{Weyl-law}.

\subsection{Scattering determinants}\label{scattering-determinants}
In this subsection, we contemplate the congruence subgroups
\begin{align*}
\Gamma_{0}(q) &\coloneqq \bigg\{\begin{pmatrix} a & b \\ c & d \end{pmatrix} \in \PSL_{2}(\Z): c \equiv 0 \tpmod q \bigg\},\\
\Gamma_{1}(q) &\coloneqq \bigg\{\begin{pmatrix} a & b \\ c & d \end{pmatrix} \in \PSL_{2}(\Z): 
a, d \equiv 1 \tpmod q, \ c \equiv 0 \tpmod q \bigg\},\\
\Gamma(q) &\coloneqq \bigg\{\begin{pmatrix} a & b \\ c & d \end{pmatrix} \in \PSL_{2}(\Z): 
a, d \equiv 1 \tpmod q, \ b, c \equiv 0 \tpmod q \bigg\}.
\end{align*}
Recall that
\begin{equation*}
\vol(\Gamma \backslash \mathbb{H}) = \frac{\pi}{3} [\PSL_{2}(\Z): \Gamma] = 
	\begin{cases}
	\dfrac{\pi}{3} q \displaystyle{\prod_{p \mid q}} \left(1+\dfrac{1}{p} \right) & \text{for $\Gamma = \Gamma_{0}(q)$},\\
	\dfrac{\pi}{3} q^{2} \displaystyle{\prod_{p \mid q}} \left(1-\dfrac{1}{p^{2}} \right) & \text{for $\Gamma = \Gamma_{1}(q)$},\\
	\dfrac{\pi}{3} q^{3} \displaystyle{\prod_{p \mid q}} \left(1-\dfrac{1}{p^{2}} \right) & \text{for $\Gamma = \Gamma(q)$}.
	\end{cases}
\end{equation*}
For $\Gamma = \Gamma_{0}(q), \, \Gamma_{1}(q)$, or $\Gamma(q)$, an accurate calculation of $\varphi$ was executed by Hejhal and Huxley for squarefree $q$ and for every $q$, respectively. For notational convenience, we only record the result of Huxley.
\begin{theorem}[Huxley~\cite{Huxley1984}]\label{Hejhal-Huxley}
Let $\Gamma = \Gamma_{0}(q), \, \Gamma_{1}(q)$, or $\Gamma(q)$. Then the scattering determinant of $\Gamma$ is given by
\begin{equation}\label{scattering-determinant}
\varphi(s) = (-1)^{(h-h_{0})/2} \left(\frac{\Gamma(1-s)}{\Gamma(s)} \right)^{h} \left(\frac{A}{\pi^{h}} \right)^{1-2s}
\sideset{}{^{\star}} \prod_{\psi} \frac{L(2-2s, \overline{\psi})}{L(2s, \psi)}.
\end{equation}
Notation is as follows: $h$ equals the number of inequivalent cusps, and $h_{0}$ is an integer for which
\begin{equation*}
\Tr(\Phi(s)) \to -h_{0} \quad \text{as} \quad s \to \frac{1}{2}.
\end{equation*}
The Dirichlet characters $\psi$ appearing in the product can be expressed as
\begin{equation*}
\psi(n) = \psi_{1}(n) \psi_{2}(n) \omega_{m_{1} m_{2}}(n) \quad \text{for} \quad n \in \N,
\end{equation*}
where $\psi_{\ell} \ (\ell = 1, 2)$ is a primitive Dirichlet character modulo $q_{\ell}$ and $\omega_{m_{1} m_{2}}$ is the trivial character modulo $m_{1} m_{2}$. As for the product over $\psi$, the variables $q_{1}$, $q_{2}$, $m_{1}$, and $m_{2}$ are over all positive integers satisfying the conditions below, and $\psi_{\ell} \ (\ell = 1, 2)$ runs over all possible Dirichlet characters.
\begin{enumerate}
\item[(a)] $(m_{1}, m_{2}) = 1, \, m_{1} q_{1} \mid q, \, m_{2} q_{2} \mid q$ for $\Gamma = \Gamma(q)$,
\item[(b)] (a) and $m_{1} = 1, \, q_{1} \mid m_{2}$ for $\Gamma = \Gamma_{1}(q)$,
\item[(c)] (b) and $q_{1} = q_{2}, \, \psi_{1} = \psi_{2}$ for $\Gamma = \Gamma_{0}(q)$.
\end{enumerate}
The product in~\eqref{scattering-determinant} has $h$ terms and $A$ is a positive integer composed of primes dividing $q$:
\begin{equation*}
A = 
	\begin{cases}
	\displaystyle{\prod_{(a)} m_{1} m_{2} q_{1} q} & \text{for $\Gamma = \Gamma(q)$},\\
	\displaystyle{\prod_{(b)} q_{1} q} & \text{for $\Gamma = \Gamma_{1}(q)$},\\
	\displaystyle{\prod_{(c)} \frac{q_{1} q}{(m_{2}, q/m_{2})}} & \text{for $\Gamma = \Gamma_{0}(q)$}.
	\end{cases}
\end{equation*}
\end{theorem}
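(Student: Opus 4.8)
Since $\varphi=\det\Phi$ depends only on the constant Fourier coefficients of the Eisenstein series of $\Gamma$, my plan is to compute the scattering matrix $\Phi(s)$ in a convenient basis and then take its determinant. First I would fix a complete system of inequivalent cusps $\mathfrak a$ of $\Gamma$ with scaling matrices $\sigma_{\mathfrak a}$ and form the Eisenstein series $E_{\mathfrak a}(z,s)=\sum_{\gamma\in\Gamma_{\mathfrak a}\backslash\Gamma}(\Im\sigma_{\mathfrak a}^{-1}\gamma z)^{s}$ for $\Re(s)>1$; the entry $\varphi_{\mathfrak a\mathfrak b}(s)$ is the coefficient of $y^{1-s}$ in the constant term of $E_{\mathfrak a}$ at $\mathfrak b$. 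Unfolding along the Bruhat decomposition turns this into $\sqrt\pi\,\Gamma(s-\tfrac12)\Gamma(s)^{-1}$ times a Dirichlet series $\sum_{c}\mathcal{C}_{\mathfrak a\mathfrak b}(c)\,c^{-2s}$, whose coefficients $\mathcal{C}_{\mathfrak a\mathfrak b}(c)$ are, for these congruence groups, elementary Ramanujan-type sums over residues modulo $q$. For $\Gamma_{0}(q)$ this is classical; for $\Gamma_{1}(q)$ and $\Gamma(q)$ the cusps are indexed by pairs of residue classes, which is the source of the two moduli $q_{1},q_{2}$ and the auxiliary parameters $m_{1},m_{2}$ appearing in the statement.

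The decisive step is a change of basis that block-diagonalises $\Phi(s)$, each block being attached to a Dirichlet character $\psi$. The correct combinations are obtained by twisting the residue-indexed cusps against multiplicative characters modulo $q$; equivalently one simultaneously diagonalises the diamond operators $\langle d\rangle$, and for $\Gamma_{0}(q)$ one additionally invokes the old/new decomposition to merge the Eisenstein series induced from divisors of $q$ — it is exactly this step that produces the constraints (a)--(c) and the bookkeeping integer $A$. Once this is done, each scalar block is, up to a constant and a power of $\pi$, a ratio $L(2s-1,\psi)/L(2s,\psi)$, since the Dirichlet series $\sum_{c}\mathcal{C}(c)c^{-2s}$ has an Euler product. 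Applying the functional equation $\Lambda(s,\psi)=\varepsilon(\psi)\Lambda(1-s,\overline\psi)$ to the numerator converts $L(2s-1,\psi)$ into $L(2-2s,\overline\psi)$ and simultaneously releases the archimedean factor $\Gamma(1-s)/\Gamma(s)$ and a conductor factor. Multiplying the $h$ blocks, the Gamma factors combine to $(\Gamma(1-s)/\Gamma(s))^{h}$, the conductors and $\varepsilon$-factors combine to $(A/\pi^{h})^{1-2s}$, and the asserted product over $\psi$ remains.

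The global sign $(-1)^{(h-h_{0})/2}$ is not visible from the $L$-function manipulation and must be fixed separately, using only the structural identities $\Phi(s)\Phi(1-s)=I$ and $\overline{\Phi(\bar s)}=\Phi(s)$: these force $\varphi(1/2)=\pm1$, and the number of $-1$ eigenvalues of the symmetric involution $\Phi(1/2)$ is pinned down by $\Tr\Phi(1/2)\to -h_{0}$, which yields the sign. I expect the main obstacle to be the middle step in the non-squarefree case: enumerating the cusps of $\Gamma_{1}(q)$ and $\Gamma(q)$, keeping exact track of which characters $\psi=\psi_{1}\psi_{2}\omega_{m_{1}m_{2}}$ occur and with what multiplicity, and matching the resulting conductors so that $A$ comes out precisely as stated. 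Everything beyond that combinatorial bookkeeping of the old/new structure is just the functional equation of Dirichlet $L$-functions applied block by block.
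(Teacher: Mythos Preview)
The paper does not prove this theorem: it is stated as a result of Huxley (with a citation to~\cite{Huxley1984}) and used as a black box in the proof of Theorem~\ref{main}. There is therefore no proof in the paper to compare your sketch against.

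That said, your outline is the right strategy and is essentially what Huxley does. Computing the constant terms of the Eisenstein series via the Bruhat decomposition, diagonalising the scattering matrix in the character basis (the diamond operators for $\Gamma_{1}(q)$ and $\Gamma(q)$, plus the Atkin--Lehner/newform structure for $\Gamma_{0}(q)$), and then applying the functional equation of Dirichlet $L$-functions block by block is the standard route, and your identification of the combinatorial bookkeeping in the non-squarefree case as the main obstacle is accurate --- this is exactly why Hejhal's earlier computation was restricted to squarefree $q$. The determination of the sign via $\Phi(s)\Phi(1-s)=I$ and the trace at $s=1/2$ is also correct. If you wanted to turn this into a full proof you would have to carry out the cusp enumeration and the character decomposition carefully, but there is no missing idea in your sketch.
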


\section{Proof of Theorem~\ref{main}}\label{proof-of-theorem-1.1}
This section is aimed at proving Theorem~\ref{main} with the tools in \S\ref{sketch-of-requisites} in mind.

\begin{proof}[Proof of Theorem~\ref{main}]
Using Theorem~\ref{Weyl}, one obtains
\begin{align}\label{spectral-exponential-sum}
\begin{split}
\mathcal{S}(T, X) &= \int_{1}^{T} X^{it} dN_{\Gamma}(t)\\
& = \frac{\vol(\Gamma \backslash \mathbb{H})}{2\pi} 
\int_{1}^{T} X^{it} t dt-\int_{1}^{T} X^{it} dM_{\Gamma}(t)+\int_{1}^{T} X^{it} dS(T)+O(\log T)\\
& = \mathscr{L}^{1}+\mathscr{L}^{2}+\mathscr{L}^{3}+O(\log T).
\end{split}
\end{align}
We first handle $\mathscr{L}^{1}$~for~which we use integration by parts to deduce
\begin{equation*}
\mathscr{L}^{1} = \frac{\vol(\Gamma \backslash \mathbb{H})}{2\pi i} \frac{X^{iT}}{\log X} T+O(1).
\end{equation*}
As for the second term $\mathscr{L}^{2}$, we calculate the scattering determinant via Theorem~\ref{Hejhal-Huxley}:
\begin{equation*}
-\frac{\varphi^{\prime}}{\varphi} \left(\frac{1}{2}+it \right)
 = 2\log \frac{A}{\pi^{h}}+h\frac{\Gamma^{\prime}}{\Gamma} \left(\frac{1}{2} \pm it \right)
 + 2\sideset{}{^{\star}} \sum_{\psi} \left(\frac{L^{\prime}}{L}(1-2it, \overline{\psi})+\frac{L^{\prime}}{L}(1+2it, \psi) \right).
\end{equation*}
Hence we exploit the Stirling asymptotics, deriving
\begin{equation}\label{M}
M_{\Gamma}(t) = \frac{1}{4\pi i} \log \sideset{}{^{\star}} \prod_{\psi} 
\left(\frac{L(1+2it, \psi) L(1+2it, \overline{\psi})}{L(1-2it, \psi) L(1-2it, \overline{\psi})} \right)+O(t \log t).
\end{equation}
We use integration by parts again and then bounding trivially the Dirichlet $L$-functions in~\eqref{M} yields
\begin{align*}
\mathscr{L}^{2} &= -\frac{1}{4\pi i} \sideset{}{^{\star}} \sum_{\psi} \int_{1}^{T} X^{it} 
d\log \left(\frac{L(1+2it, \psi) L(1+2it, \overline{\psi})}{L(1-2it, \psi) L(1-2it, \overline{\psi})} \right)+O(\log T)\\
& = \frac{\log X}{4\pi} \sideset{}{^{\star}} \sum_{\psi} \int_{1}^{T} 
X^{it}(\log(L(1+2it, \psi) L(1+2it, \overline{\psi}))-\log(L(1-2it, \psi) L(1-2it, \overline{\psi}))) dt+O(\log T).
\end{align*}
Letting $\delta = (\log X)^{-1}$, the first integration involving $\log(L(1+2it, \psi) L(1+2it, \overline{\psi}))$ equals
\begin{align*}
\frac{1}{2i} \int_{1+2i}^{1+2iT} X^{(t-1)/2} \log(L(t, \psi) L(t, \overline{\psi})) dt\\
&\hspace{-5cm} = \frac{1}{2i} \left(\int_{1+\delta+2i}^{1+\delta+2iT}-\int_{1+2iT}^{1+\delta+2iT}
 + \int_{1+2i}^{1+\delta+2i} \right) X^{(t-1)/2} \log(L(t, \psi) L(t, \overline{\psi})) dt\\
&\hspace{-5cm} = \frac{1}{2i} \int_{1+\delta+2i}^{1+\delta+2iT} X^{(t-1)/2} \log(L(t, \psi) L(t, \overline{\psi})) dt+O(\log T)\\
&\hspace{-5cm} = X^{\delta/2} \sum_{n = 2}^{\infty} \frac{(\psi(n)+\overline{\psi}(n)) \Lambda(n)}{n^{1+\delta} \log n} 
\int_{1}^{T} \exp(it(\log X-2\log n)) dt+O(\log T)\\
&\hspace{-5cm} = \frac{2T}{\log X} X^{-1/2}(\psi(X^{1/2})+\overline{\psi}(X^{1/2})) \Lambda(X^{1/2})+O(\log T),
\end{align*}
where $\Lambda(X)$ is the classical von Mangoldt function. The second integral involving $\log(L(1-2it, \psi) L(1-2it, \overline{\psi}))$ is obviously bounded, since the corresponding integrand becomes $\exp(it(\log X+2\log n))$.

It remains to deal with the third term $\mathscr{L}^{3}$. By partial integration, we have
\begin{equation*}
\mathscr{L}^{3} = X^{iT} S(T)-i\log X \int_{1}^{T} X^{it} S(T) dt+O(1)
 = X^{iT} S(T)+\mathscr{L}^{4} \log X-i\mathscr{L}^{5} \log X+O(1),
\end{equation*}
where
\begin{equation*}
\mathscr{L}^{4} = \int_{1}^{T} \sin(t \log X) S(T) dt \quad \text{and} \quad \mathscr{L}^{5} = \int_{1}^{T} \cos(t \log X) S(T) dt.
\end{equation*}
For the Selberg zeta function in~\eqref{Selberg-zeta-function}, we define
\begin{equation*}
F(z) = \log Z_{\Gamma}(z) \sin \left(\left(\frac{1}{2}-z \right)i \log X \right),
\end{equation*}
where we choose the principal value of the logarithm and the branch of $\log Z_{\Gamma}(z)$ is taken such that $\log Z_{\Gamma}(z)$~is~real for every $z > 1$. Mimicking the treatment of $\mathscr{L}^{2}$, we consider the rectangle with vertices $1+\delta+i, \, 1+\delta+iT, \, 1/2+iT$ and $1/2+i$ with $\delta = (\log X)^{-1}$. Hence, it follows that
\begin{equation*}
\mathscr{L}^{4} 
 = \Im \left(\frac{1}{\pi i} \int_{1/2+i}^{1/2+iT} F(z) dz \right)\\
 = \Im \left(\frac{1}{\pi i} 
\left(\int_{1+\delta+i}^{1+\delta+iT}-\int_{1/2+iT}^{1+\delta+iT}+\int_{1/2+i}^{1+\delta+i} \right) F(z) dz \right).
\end{equation*}
The third integral is bounded. From the definition~\eqref{Selberg-zeta-function}, we compute the first integral as
\begin{align*}
&\frac{1}{\pi} \int_{1}^{T} \log Z_{\Gamma}(1+\delta+it) \sin \left(\left(t-\left(\frac{1}{2}+\delta \right)i \right) \log X \right) dt\\
& = -\frac{1}{2\pi i} \sum_{\{P \}} \frac{\Lambda_{\Gamma}(P) (1-N(P)^{-1})^{-1}}{N(P)^{1+\delta} \log N(P)} 
\int_{1}^{T} \exp \left(i \left(t-\left(\frac{1}{2}+\delta \right)i \right) \log X-it \log N(P) \right) dt\\
&\hspace{0.4cm} + \frac{1}{2\pi i} \sum_{\{P \}} \frac{\Lambda_{\Gamma}(P) (1-N(P)^{-1})^{-1}}{N(P)^{1+\delta} \log N(P)} 
\int_{1}^{T} \exp \left(i \left(t-\left(\frac{1}{2}+\delta \right)i \right) \log X-it \log N(P) \right) dt\\
& = -\frac{X^{1/2+\delta}}{2\pi i} \sum_{\{P \}} 
\frac{\Lambda_{\Gamma}(P) (1-N(P)^{-1})^{-1}}{N(P)^{1+\delta} \log N(P)} \int_{1}^{T} \exp(it(\log X-\log N(P))) dt+O(1)\\
& = -\frac{\Lambda_{\Gamma}(X)}{2\pi iX^{1/2} \log X} (X^{1/2}-X^{-1/2})^{-1} T+O(1),
\end{align*}
whence we obtain
\begin{equation*}
\Im \left(\frac{1}{\pi i} \int_{1+\delta+i}^{1+\delta+iT} F(z) dz \right)
 = \frac{T}{2\pi \log X} X^{-1/2}(X^{1/2}-X^{-1/2})^{-1} \Lambda_{\Gamma}(X)+O(1).
\end{equation*}
Finally, the second integral can be bounded as
\begin{equation*}
-\frac{1}{\pi i} \int_{1/2+iT}^{1+\delta+iT} F(z) dz
 = \frac{1}{\pi} \int_{1/2}^{1+\delta} \log Z_{\Gamma}(\sigma+iT) \sinh((\sigma-1+iT) \log X) d\sigma
 \ll \mathcal{G}(T),
\end{equation*}
where $\mathcal{G}(T)$ is the same as in the introduction. The integral $\mathscr{L}^{5}$ is estimated in a similar manner, namely~$\mathscr{L}^{5} \ll \mathcal{G}(T)$. Collecting those estimates concludes the proof~of~Theorem~\ref{main}.
\end{proof}

\section{Counterexamples}\label{counterexamples}

\subsection{Phillips--Sarnak theory}\label{Phillips-Sarnak-theory}
We are interested in the validity of the asymptotic law $N_{\Gamma}(T) \sim \vol(\Gamma \backslash \mathbb{H}) T^{2}/4\pi$. In a major breakthrough, Phillips--Sarnak~\cite{PhillipsSarnak1985-2} innovated a new methodology~of the real analytic deformation of discrete groups in $\PSL_{2}(\R)$. They examined the behaviour of a Maa{\ss} cusp~form for $\Gamma_{0}(p)$ under quasi-conformal deformations $\Gamma_{\tau}$ with $0 \leqslant \tau \leqslant 1$. The folklore theory of Phillips--Sarnak manifests that Maa{\ss} cusp forms~are~rare whose existence should be restricted to certain arithmetic groups. The work of Luo~\cite{Luo2001} yields that Weyl's law
\begin{equation*}
N_{\Gamma_{\tau}}(T) \sim \frac{\vol(\Gamma \backslash \mathbb{H})}{4\pi} T^{2}
\end{equation*}
cannot hold for generic $\Gamma_{\tau}$ if one assumes that the eigenvalue multiplicities for $\Gamma_{0}(p) \backslash \mathbb{H}$ are bounded.

\begin{observation}\label{essentially-cuspidal}
Non-arithmetic groups should not be essentially cuspidal and there should only be a finite~number of Maa{\ss} cusp forms in such a case. If the latter assertion is true, the bound $\mathcal{S}(T, X) \ll 1$ holds.
\end{observation}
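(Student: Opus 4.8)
The plan is to separate the Observation into its two halves. The substantive \emph{conditional} content --- that a finite cuspidal spectrum forces $\mathcal{S}(T,X) \ll 1$ --- is what I would actually prove, and it is essentially a one-line argument. First I would note that if $\Gamma$ has only finitely many Maa\ss{} cusp forms, then the set of spectral parameters $\{t_{j}\}$ attached to non-exceptional eigenvalues $\lambda_{j} = 1/4 + t_{j}^{2} \geqslant 1/4$ is finite, of some cardinality $K = K(\Gamma)$; since every such $t_{j}$ is real, each summand satisfies $|X^{it_{j}}| = 1$, whence
\begin{equation*}
|\mathcal{S}(T,X)| = \Bigl| \sum_{t_{j} \leqslant T} X^{it_{j}} \Bigr| \leqslant K
\end{equation*}
uniformly in $T > 1$ and $X > 1$, so that $\mathcal{S}(T,X) \ll 1$ with an implied constant depending only on $\Gamma$. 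As a consistency check one observes that this fits Theorem~\ref{main}: for such a $\Gamma$ the Weyl law~\eqref{Weyl-law} forces $M_{\Gamma}(T) \sim \vol(\Gamma \backslash \mathbb{H})/(4\pi)\, T^{2}$, so the continuous spectrum alone carries the $T^{2}$ main term, and the $T$-sized oscillatory term together with the von Mangoldt-like pieces and the $S(T)$-term in~\eqref{Fujii} must collapse down to $O(1)$.

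For the first half --- that a non-arithmetic $\Gamma$ \emph{should} not be essentially cuspidal and \emph{should} carry only finitely many cusp forms --- I would not attempt a rigorous proof but instead record the heuristic from the Phillips--Sarnak dissolution picture set up in Section~\ref{detecting-counterexamples}. The point is that a non-arithmetic cofinite $\Gamma$ is expected to sit at, or be joined by a real-analytic deformation to, a generic point of a Teichm\"{u}ller space along which cusp forms generically dissolve into scattering resonances; granting the multiplicity hypothesis recalled in the footnote to Section~\ref{Phillips-Sarnak-theory-and-eigenvalue-multiplicities}, the theorem of Luo~\cite{Luo2001} shows the sharp Weyl law $N_{\Gamma_{\tau}}(T) \sim \vol(\Gamma \backslash \mathbb{H})/(4\pi)\, T^{2}$ must fail for generic $\Gamma_{\tau}$, and the natural expectation is that only finitely many cusp forms survive. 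Feeding this into the first paragraph yields $\mathcal{S}(T,X) \ll 1$ in the non-arithmetic case.

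The hard part is exactly this first half: proving unconditionally that a non-arithmetic $\Gamma$ has only finitely many Maa\ss{} cusp forms is a notorious open problem --- a strong converse to the Phillips--Sarnak phenomenon --- and even the weaker statement that generic $\Gamma$ is not essentially cuspidal rests on the unproven multiplicity (Fermi-golden-rule) inputs. This is precisely why the statement is phrased as an \emph{observation}: the implication ``finite cuspidal spectrum $\Rightarrow \mathcal{S}(T,X) \ll 1$'' is trivial once set up correctly, while its hypothesis is conjectural. Accordingly I would present the argument as the uniform bound above, flagged explicitly as conditional on the finiteness of the cuspidal spectrum, with a pointer to~\cite{PhillipsSarnak1985-2,Luo2001} and to Section~\ref{detecting-counterexamples} for the current status of that hypothesis.
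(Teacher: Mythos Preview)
Your proposal is correct and matches how the paper treats this statement: the paper presents Observation~\ref{essentially-cuspidal} not as a theorem with proof but as a heuristic consequence of the Phillips--Sarnak deformation theory discussed in Section~\ref{Phillips-Sarnak-theory-and-eigenvalue-multiplicities}, with the conditional implication (finitely many cusp forms $\Rightarrow \mathcal{S}(T,X) \ll 1$) left as the immediate triangle-inequality bound you give. One small quibble: your consistency check against Theorem~\ref{main} is misplaced, since that theorem is stated only for the congruence subgroups $\Gamma_{0}(q), \Gamma_{1}(q), \Gamma(q)$, which are arithmetic and essentially cuspidal; the relevant comparison for a general cofinite $\Gamma$ is rather with Theorem~\ref{explicit-formula-for-the-spectral-exponential-sum} and the surrounding discussion in Section~\ref{generalization-to-generic-groups}, where the $M_{\Gamma}$-integral is retained precisely to accommodate the case that $\Gamma$ is not essentially cuspidal.
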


We say that $\Gamma$ is \textit{essentially cuspidal} if $N_{\Gamma}(T)$ dominates the behaviour of $N_{\Gamma}(T)+M_{\Gamma}(T)$, namely if
\begin{equation*}
N_{\Gamma}(T) \sim \frac{\vol(\Gamma \backslash \mathbb{H})}{4\pi} T^{2}.
\end{equation*}
Here we consider the eigenvalue multiplicity problem for arithmetic groups with an emphasis on the case~of~$\Gamma_{0}(q)$. The case of $q = 1$ was first considered by Cartier~\cite{Cartier1971} and consequently he conjectured that the cuspidal spectrum is simple based on limited numerical data. Nowadays, intensive numerical computations of Then~\cite{Then2005} are~available, determining the first 53000 eigenvalues for $\PSL_{2}(\Z) \backslash \mathbb{H}$.

For $q \geqslant 2$, the situation is entirely different, since the cuspidal spectrum for $\Gamma_{0}(q) \backslash \mathbb{H}$ is not necessarily simple, which follows from the existence of newforms and oldforms having the same Laplace eigenvalue.~Nevertheless, one can ask whether the new part of the cuspidal spectrum consisting of eigenvalues for $\Gamma_{0}(q) \backslash \mathbb{H}$ associated~to newforms is simple. This conjecture should be true for squarefree $q$, which is manifested in the work of~Bolte--Johansson~\cite{BolteJohansson1999,BolteJohansson1999-2} and Str\"{o}mbergsson~\cite{Strombergsson2001}. We also refer the reader to the recent work of Humphries~\cite{Humphries2019}.

Since the estimate $S(T) \ll T/\log T$ holds for all cofinite surfaces, our proof of Theorem~\ref{main} works for more extensive context, although the integral entailing the winding number remains. One then derives from~\eqref{spectral-exponential-sum} that
\begin{theorem}\label{explicit-formula-for-the-spectral-exponential-sum}
Let $\Gamma$ be a cofinite subgroup of $\PSL_{2}(\R)$. For a fixed $X > 1$, we have as $T \to \infty$ that
\begin{equation}\label{generic}
\mathcal{S}(T, X)  = \frac{\vol(\Gamma \backslash \mathbb{H})}{2\pi i} \frac{X^{iT}}{\log X} T
 + \frac{T}{2\pi}(X^{1/2}-X^{-1/2})^{-1} \Lambda_{\Gamma}(X)-\int_{1}^{T} X^{it} dM_{\Gamma}(t)+O \left(\frac{T}{\log T} \right).
\end{equation}
\end{theorem}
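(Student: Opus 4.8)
The plan is to specialize the proof of Theorem~\ref{main} to a general cofinite $\Gamma$, retaining the integral over the winding number $M_{\Gamma}(t)$ unsimplified instead of invoking Huxley's explicit formula for the scattering determinant. First I would start from the Stieltjes-integral representation
\begin{equation*}
\mathcal{S}(T, X) = \int_{1}^{T} X^{it} \, dN_{\Gamma}(t),
\end{equation*}
and substitute the strong Weyl law~\eqref{Weyl-law-2}, valid for any discrete cofinite $\Gamma \subset PSL_{2}(\R)$, which gives $dN_{\Gamma}(t) = \frac{\vol(\Gamma \backslash \mathbb{H})}{2\pi} t \, dt - dM_{\Gamma}(t) + dS(t) + dw(t)$. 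This decomposes $\mathcal{S}(T, X)$ into four pieces: the archimedean term $\mathscr{L}^{1}$, the scattering term $\mathscr{L}^{2} = -\int_{1}^{T} X^{it} \, dM_{\Gamma}(t)$, the $S(t)$-term $\mathscr{L}^{3}$, and the negligible $w(t)$-contribution (which is $O(1)$ since $w'(t) \ll t^{-2}$ is integrable).

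Next I would evaluate $\mathscr{L}^{1}$ exactly as in the proof of Theorem~\ref{main}: integration by parts yields $\mathscr{L}^{1} = \frac{\vol(\Gamma \backslash \mathbb{H})}{2\pi i} \frac{X^{iT}}{\log X} T + O(1)$, using that $X > 1$ is fixed so $\log X \ne 0$. The term $\mathscr{L}^{2}$ is simply left as $-\int_{1}^{T} X^{it} \, dM_{\Gamma}(t)$; this is where the general case genuinely differs, since without an explicit scattering determinant one cannot extract the $\Lambda(X^{1/2})$-type contribution. However, the $\Lambda_{\Gamma}(X)$-term must still be produced — and here is the subtle point — it comes not from $M_{\Gamma}$ but from the $S(t)$-term. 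Recall that in the proof of Theorem~\ref{main} the factor $(X^{1/2}-X^{-1/2})^{-1} \Lambda_{\Gamma}(X)$ arose from the contour integral of $\log Z_{\Gamma}$ along $\mathscr{C}$ shifted to $\Re(s) = 1 + \delta$, using the Dirichlet series $\sum_{\{P\}} \Lambda_{\Gamma}(P)(1-N(P)^{-1})^{-1} N(P)^{-s}$ for $Z_{\Gamma}'/Z_{\Gamma}$. So I would repeat that argument verbatim for generic $\Gamma$: write $\mathscr{L}^{3} = X^{iT} S(T) - i \log X \int_{1}^{T} X^{it} S(t)\, dt$, then express $\int_{1}^{T} X^{it} S(t) \, dt$ via the contour integral of $F(z) = \log Z_{\Gamma}(z) \sin((\tfrac12 - z) i \log X)$ around the rectangle with vertices $1+\delta+i,\, 1+\delta+iT,\, \tfrac12+iT,\, \tfrac12+i$. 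The horizontal segment at height $T$ contributes $O(\mathcal{G}(T))$, the bottom edge is $O(1)$, and the right edge at $\Re(s) = 1+\delta$ produces the $\Lambda_{\Gamma}(X)$-main term of size $\frac{T}{2\pi}(X^{1/2}-X^{-1/2})^{-1}\Lambda_{\Gamma}(X)$ as in the cited computation — this step uses only the Euler-product definition~\eqref{Selberg-zeta-function} of $Z_{\Gamma}$ and its absolute convergence for $\Re(s) > 1$, which hold for any cofinite $\Gamma$.

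Finally I would replace the $O(\mathcal{G}(T))$ error by the weaker but unconditional $O(T/\log T)$, using the trivial bound $\mathcal{G}(T) \ll T/\log T$ (from the Weyl-law/$S(T)$ estimates recalled in the introduction), and the trivial bound $S(T) \ll T/\log T$ to absorb the boundary term $X^{iT} S(T)$ into the same error. The term $X^{iT}S(T)$ in~\eqref{Fujii} is genuinely of order at most $T/\log T$, so it is swallowed; this is exactly why the generic statement~\eqref{generic} has error $O(T/\log T)$ rather than $O(\mathcal{G}(T))$. Collecting $\mathscr{L}^{1}$, $\mathscr{L}^{2}$ (left as the integral), the $\Lambda_{\Gamma}(X)$-piece of $\mathscr{L}^{3}$, and the combined error $O(T/\log T)$ gives precisely~\eqref{generic}. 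The main obstacle — more precisely, the only place requiring care — is the bookkeeping in the $\mathscr{L}^{3}$ contour shift: one must verify that moving $\log Z_{\Gamma}$ from the critical line to $\Re(s) = 1+\delta$ with $\delta = (\log X)^{-1}$ picks up no zeros of $Z_{\Gamma}$ (true, since $Z_{\Gamma}$ is zero-free for $\Re(s) > 1$) and that the sine factor $\sinh((\sigma-1+iT)\log X)$ on the top edge is controlled by $X^{\sigma-1}$ uniformly, which it is on $\tfrac12 \le \sigma \le 1+\delta$ for fixed $X$, so the top-edge contribution is genuinely $O(\mathcal{G}(T)) = O(T/\log T)$.
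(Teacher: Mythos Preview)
Your proposal is correct and follows essentially the same approach as the paper, which simply remarks that the proof of Theorem~\ref{main} carries over to general cofinite $\Gamma$ once $\mathscr{L}^{2}$ is left as $-\int_{1}^{T}X^{it}\,dM_{\Gamma}(t)$ and the trivial bounds $S(T)\ll T/\log T$ and $\mathcal{G}(T)\ll T/\log T$ are invoked to absorb $X^{iT}S(T)$ and the top-edge contour contribution. One small omission: in writing $dN_{\Gamma}(t)=\tfrac{\vol(\Gamma\backslash\mathbb{H})}{2\pi}t\,dt-dM_{\Gamma}(t)+dS(t)+dw(t)$ you have dropped the $-\tfrac{h}{\pi}(1+\log t)\,dt+c_{\Gamma}\,dt$ terms coming from the Weyl law~\eqref{Weyl-law-2}; these contribute $O(\log T)$ after integrating against $X^{it}$ (one integration by parts), which is exactly the $O(\log T)$ appearing in the paper's decomposition~\eqref{spectral-exponential-sum} and is in any case absorbed into the final $O(T/\log T)$.
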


If $\Gamma$ is essentially cuspidal, the spectral exponential sum $\mathcal{S}(T, X)$ asymptotically equals the first term in~\eqref{generic}.~As an important instance, if $\Gamma \backslash \mathbb{H}$ is compact, then the aforementioned formula can be reduced to
\begin{equation*}
\mathcal{S}(T, X) = \frac{\vol(\Gamma \backslash \mathbb{H})}{2\pi i} \frac{X^{iT}}{\log X} T
 + \frac{T}{2\pi}(X^{1/2}-X^{-1/2})^{-1} \Lambda_{\Gamma}(X)+O \left(\frac{T}{\log T} \right).
\end{equation*}


\end{document}